\theoremstyle{plain} 
\newtheorem{theorem}{Theorem}[section]
\newtheorem{proposition}[theorem]{Proposition}
\theoremstyle{definition} 
\theoremstyle{definition} 
\newtheorem*{ex*}{Example}
\theoremstyle{remark} 
\theoremstyle{remark} 
\newtheorem*{remark*}{Remark}
\numberwithin{equation}{section}
\newcommand{\mathsym}[1]{{}}
\newcommand{\unicode}[1]{{}}
\newcommand{\iincludegraphics}[1]{{}}
\renewcommand{\le}{\leqslant}
\renewcommand{\ge}{\geqslant}
\newcommand{\ii}[1]{\operatorname{I}\left\{#1\right\}}
\newcommand{\D}{\overset{\operatorname{D}}=}
\newcommand{\dd}{\,{\mathrm d}}
\newcommand{\R}{\mathbb{R}}
\newcommand{\E}{\mathsf{E}}
\renewcommand{\P}{\mathsf{P}}
\newcommand{\al}{\alpha}
\newcommand{\be}{\beta}
\newcommand{\ga}{\gamma}
\newcommand{\ka}{\kappa}
\newcommand{\si}{\sigma}
\begin{document}

\begin{frontmatter}

\title{Exact lower bounds on the exponential moments of Winsorized and truncated random variables
}
\runtitle{Bounds on the exponential moments
}
\date{\today}

\begin{aug}
\author{\fnms{Iosif} \snm{Pinelis}\ead[label=e1]{ipinelis@mtu.edu}}
\runauthor{Iosif Pinelis}

\affiliation{Michigan Technological University}

\address{Department of Mathematical Sciences\\
Michigan Technological University\\
Houghton, Michigan 49931, USA\\
E-mail: \printead[ipinelis@mtu.edu]{e1}
}
\end{aug}

\begin{abstract}
Exact lower bounds on the exponential moments of $\min(y,X)$ and $X\ii{X<y}$ are provided given the first two moments of a random variable $X$. 
These bounds are useful in work on large deviations probabilities and nonuniform Berry-Esseen bounds, when the Cram\'er tilt transform may be employed. Asymptotic properties of these lower bounds are presented. Comparative advantages of the Winsorization $\min(y,X)$ over the truncation $X\ii{X<y}$ are demonstrated. 
\end{abstract}

 
\begin{keyword}[class=AMS]
\kwd[Primary ]{60E15}
\kwd[; secondary ]{60E10}
\kwd{60F10}
\kwd{60F05}
\end{keyword}
\begin{keyword}
\kwd{exponential moments}
\kwd{exact lower bounds}
\kwd{Winsorization}
\kwd{truncation}
\kwd{large deviations}
\kwd{nonuniform Berry-Esseen bounds}
\kwd{Cram\'er tilt transform}
\end{keyword}

\end{frontmatter}

\settocdepth{chapter}


\settocdepth{subsubsection}

\section{Introduction}\label{intro}

Cram\'er's tilt transform of a random variable (r.v.) $X$ is a r.v.\ $X_c$ such that 
\begin{equation}\label{eq:tilt} 
	\E f(X_c)=\frac{\E f(X)e^{c\,X}}{\E e^{c\,X}}
\end{equation}
for all nonnegative Borel functions $f$, where $c$ is a real parameter. This transform is an important tool in the theory of large deviation probabilities $\P(X>x)$, where $x>0$ is a large number; then the appropriate value of the parameter $c$ is positive.  
As e.g.\ in the proof of \cite[Theorem~2.3]{nonlin}, one often needs to bound from above the $f$-moment $\E f(X_c)$ of the $c$-tilted r.v.\ $X_c$ for a nonnegative $f$ -- and therefore one needs to bound the denominator $\E e^{c\,X}$ in \eqref{eq:tilt} from below. 
If $\E X=0$, this can be done quite easily: by Jensen's inequality, $\E e^{c\,X}\ge1$. 

A usual problem with this approach occurs when the right tail of $X$ is too heavy for $\E e^{cX}$ to be finite and hence for the transform to make sense. The standard cure in such situations is to truncate the r.v.\ $X$, say to $T_y(X):=X\ii{X\le y}$ for some real number $y>0$, where $\ii{\cdot}$ is the indicator function. Then, of course, $\E e^{c\,T_y(X)}<\infty$ for any $c>0$. However, now instead of the condition $\E X=0$ one has $\E T_y(X)\le0$, and the inequality $\E e^{c\,T_y(X)}\ge1$ (in place of $\E e^{c\,X}\ge1$) will not hold in general. In fact, $\E e^{c\,T_y(X)}$ can be however small for some $c>0$, even if one imposes a restriction such as $\E X^2\le\si^2$ for a given real $\si>0$ -- see the discussion in Subsection~\ref{win/trunc}. 

A much better way to cut off the right tail of the distribution of $X$ is the so-called Winsorization. That is, instead of the truncation $T_y(X)$, one 
deals with $W_y(X):=y\wedge X=\min(y,X)$. 
Clearly, $W_y(X)\ge T_y(X)$ and hence $\E e^{c\,W_y(X)}\ge\E e^{c\,T_y(X)}$ for $c>0$. 
Moreover, it turns out that for any given real $\si>0$ and $y>0$ the infimum of $\E e^{c\,W_y(X)}$ over all $c>0$ and all r.v.'s $X$ with $\E X\ge0$ and $\E X^2\le\si^2$ is strictly positive; furthermore, it decreases slowly from $1$ to $0$ as $\si$ increases from $0$ to $\infty$. 
These properties of Winsorization make it a clear winner over truncation in many relevant situations.

\section{Results}\label{results}
Take any real $\si>0$. 
Let $X$ denote any r.v.\ with 
\begin{equation*}
	\E X\ge0\quad\text{and}\quad \E X^2\le\si^2.
\end{equation*}

For any positive real $a$ and $b$, let $X_{a,b}$ stand for any zero-mean r.v.\ with values in the two-point set $\{-a,b\}$; thus, the distribution of $X_{a,b}$ is uniquely determined by $a$ and $b$. 
Note also that $\E X_{a,b}^2=ab$. 

\subsection{Winsorization}\label{win}

Consider the Winsorization function defined by the formula  
\begin{equation}\label{eq:W}
	W(x):=1\wedge x. 
\end{equation}

The following proposition allows one to define the terms in which to express the exact lower bounds on $\E e^{c\,W(X)}$. 

\begin{proposition}\label{prop:win} 
Take any real $c>0$. 
\begin{enumerate}[(I)]
	\item For any real $a$, let 
\begin{equation}\label{eq:b*_a,c}
	b^*_{a,c}:=\tfrac{2(e^{c+ac}-1)-ac}c. 
\end{equation}
Then the equation $a\,b^*_{a,c}=\si^2$ has a unique positive root, say $a_{c,\si}$, so that 
\begin{equation}\label{eq:a_c,si}
	\{a_{c,\si}\}=\big\{a>0\colon a\,b^*_{a,c}=\si^2\big\}. 
\end{equation}
\item The expression 
\begin{equation}\label{eq:l1}
	\ell_1(a):=\ell_1(a,\si):=
	\ln\tfrac{a}{\si^2}-\tfrac{2(a+1)(a-\si^2)}{a^2+\si^2} 
\end{equation}
switches in sign exactly once, from $-$ to $+$, as $a$ increases from $0$ to $\si^2$. 
Therefore,
one can uniquely define $a_\si$ by the formula 
\begin{equation}\label{eq:a_si}
	\{a_\si\}=\big\{a\in(0,\si^2)\colon\ell_1(a)=0\big\}. 
\end{equation}
\end{enumerate}
\end{proposition}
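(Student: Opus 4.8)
I would reduce everything to the single‑variable function
$g(a):=a\,b^*_{a,c}=\frac{2a\,e^{c(1+a)}-2a-a^2c}{c}$ for $a>0$.
This $g$ is continuous (indeed entire) with $g(0)=0$, and $g(a)\to\infty$ as $a\to\infty$ since the term $2a\,e^{c(1+a)}$ dominates. Differentiating and simplifying gives
$g'(a)=\frac{2(1+ac)(e^{c(1+a)}-1)}{c}$,
which is strictly positive for every $a>0$ (here $c>0$ is used). Hence $g$ is a strictly increasing continuous bijection of $(0,\infty)$ onto $(0,\infty)$, so the equation $g(a)=\si^2$ has exactly one positive root, namely $a_{c,\si}$; this is \eqref{eq:a_c,si}. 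As a byproduct, $e^t>1+t$ for $t>0$ shows $b^*_{a,c}>0$ for $a>0$, as one expects of that quantity.

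\textbf{Plan for part (II): reduction to unimodality.}
First I would record the endpoint behaviour of $\ell_1$ on $(0,\si^2)$: as $a\to0{+}$ one has $\ln\frac a{\si^2}\to-\infty$ while $\frac{2(a+1)(a-\si^2)}{a^2+\si^2}\to-2$, so $\ell_1(a)\to-\infty$; and $\ell_1(\si^2)=0$ directly from \eqref{eq:l1}. Consequently the assertion reduces to showing that $\ell_1$ is strictly increasing on an initial subinterval of $(0,\si^2)$ and strictly decreasing on the remaining part. Granting this unimodality, $\ell_1$ rises from $-\infty$ to a maximum value that must be strictly positive (because $\ell_1$ then decreases continuously down to $\ell_1(\si^2)=0$), so $\ell_1$ has a unique zero $a_\si$ in $(0,\si^2)$ and changes sign there exactly once, from $-$ to $+$; this gives \eqref{eq:a_si}.

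\textbf{Plan for part (II): the computational heart.}
A direct computation yields $\ell_1'(a)=\frac{P(a)}{a\,(a^2+\si^2)^2}$, where $P$ is the quartic $P(a)=a^4-2(\si^2-1)a^3-6\si^2a^2+2\si^2(\si^2-1)a+\si^4$, and then I would use the factorization
\[
P(a)=(a^2+2a-\si^2)\,(a^2-2\si^2a-\si^2).
\]
On $(0,\si^2)$ the second factor is strictly negative, since its only positive zero, $\si^2+\sqrt{\si^4+\si^2}$, exceeds $\si^2$; the first factor is negative on $(0,a_1)$ and positive on $(a_1,\si^2)$, where $a_1:=\sqrt{1+\si^2}-1$ lies in $(0,\si^2)$. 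Hence $\ell_1'>0$ on $(0,a_1)$ and $\ell_1'<0$ on $(a_1,\si^2)$, which is precisely the unimodality needed. I expect the one genuine obstacle to be spotting and verifying this factorization of $P$; failing that, one can instead bound the number of positive zeros of $P$ by Descartes' rule of signs — the coefficient sign pattern gives at most two positive zeros, whether $\si^2<1$, $\si^2=1$, or $\si^2>1$ — and combine this with $P(0)=\si^4>0$ and $P(\si^2)=-(\si^4+\si^2)^2<0$ to conclude that $P$ has exactly one zero in $(0,\si^2)$. Everything after that is routine monotonicity bookkeeping with the two endpoint values.
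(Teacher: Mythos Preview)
Your proof is correct and follows essentially the same route as the paper: for (I) you verify that $a\,b^*_{a,c}$ is strictly increasing from $0$ to $\infty$, and for (II) you analyze the sign of $\ell_1'$ via the quartic numerator $P(a)=a(a^2+\si^2)^2\ell_1'(a)$, obtaining the same switching point $a_1=\sqrt{1+\si^2}-1$ and then the same endpoint argument. The only cosmetic difference is that the paper reaches $a_1$ by noting that $P$ is quadratic in $\si^2$, whereas you factor $P$ directly as $(a^2+2a-\si^2)(a^2-2\si^2 a-\si^2)$; these are the same factorization viewed two ways.
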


The proofs are deferred to Section~\ref{proofs}. 

Now we are ready to define three more symbols: 
\begin{equation}\label{eq:b_c,si}
	b_{c,\si}:=\si^2/a_{c,\si}; 
\end{equation}
\begin{equation}\label{eq:b_si}
	b_\si:=\si^2/a_\si; 
\end{equation}
\begin{equation}\label{eq:c_si}
	c_\si:=\tfrac{\ln b_\si}{1+a_\si}. 
\end{equation}

\begin{theorem}\label{th:win}
For any real $c>0$ 
\begin{align}
	\E\exp\{c\,W(X)\}&\ge L_{W;c,\si}:=\E\exp\{c\,W(X_{a_{c,\si},b_{c,\si}})\} 
	\label{eq:c fixed}\\
	&\ge L_{W;\si}:=\E\exp\big\{c_\si\,W(X_{a_\si,b_\si})\big\}. 
	\label{eq:c any}
\end{align}
Moreover, inequality \eqref{eq:c fixed} is strict unless $X\D X_{a_{c,\si},b_{c,\si}}$, where $\D$ denotes the equality in distribution, and inequality \eqref{eq:c any} is strict unless $c=c_\si$. 
Furthermore, 
\begin{equation}\label{eq:b_si=}
	a_\si=a_{c_\si,\si}\quad\text{and}\quad 
	b_\si=b_{c_\si,\si}=b^*_{a_\si,c_\si},
\end{equation}  
so that \eqref{eq:c any} turns into equality if and only if $c=c_\si$. 
\end{theorem}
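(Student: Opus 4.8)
The plan is to treat the three clusters separately: \eqref{eq:c fixed} by a quadratic‑minorant argument of the kind standard for Chebyshev‑system moment problems, \eqref{eq:b_si=} by a direct manipulation of the defining relations $\ell_1(a_\si)=0$ and $c_\si=\tfrac{\ln b_\si}{1+a_\si}$, and \eqref{eq:c any} by minimizing the then‑explicit function $c\mapsto L_{W;c,\si}$. For \eqref{eq:c fixed}, fix $c>0$ and put $a:=a_{c,\si}$, $b:=b_{c,\si}=\si^{2}/a$; by Proposition~\ref{prop:win}(I) one has $b=b^*_{a,c}$, and since $b^*_{a,c}=\tfrac{2(e^{c+ac}-1)-ac}{c}>2$ for $a,c>0$ we get $b>1$, hence $W(b)=1$ and $W(-a)=-a$. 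Consider the downward parabola
\[
	q(x):=e^{c}-\lambda\,(x-b)^{2},\qquad \lambda:=\frac{c\,e^{-ca}}{2(a+b)}>0 .
\]
A short computation shows that the vertex/tangency conditions $q(b)=e^{c}$, $q'(b)=0$, $q(-a)=e^{-ca}$, $q'(-a)=c\,e^{-ca}$ are together equivalent to the single relation $a+b=\tfrac{2(e^{c+ac}-1)}{c}$, i.e.\ $b=b^*_{a,c}$, so the data determine $q$. Next one checks $q(x)\le e^{c\,W(x)}$ for all $x\in\R$, with equality exactly for $x\in\{-a,b\}$: for $x\ge1$ this holds because $q\le e^{c}=e^{c\,W(x)}$, with equality iff $x=b$; on $(-\infty,1]$, where $e^{c\,W(x)}=e^{cx}$, the function $h(x):=e^{cx}-q(x)$ is strictly convex ($h''=c^{2}e^{cx}+2\lambda>0$) with $h(-a)=h'(-a)=0$, so $h(x)>0$ for $x\ne-a$. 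Hence, using $\E X\ge0$, $\E X^{2}\le\si^{2}$, $b>0$, $\lambda>0$ and $ab=\si^{2}$,
\[
	\E e^{c\,W(X)}\ \ge\ \E q(X)=e^{c}-\lambda\big(\E X^{2}-2b\,\E X+b^{2}\big)\ \ge\ e^{c}-\lambda(\si^{2}+b^{2})=\E q(X_{a,b})=L_{W;c,\si}.
\]
Equality throughout forces $e^{c\,W(X)}=q(X)$ a.s.\ (so $X\in\{-a,b\}$ a.s.) and $\E X=0$; since a two‑point zero‑mean law on $\{-a,b\}$ is unique, $X\D X_{a_{c,\si},b_{c,\si}}$, which is the asserted strictness of \eqref{eq:c fixed}.

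For \eqref{eq:b_si=}, since $a_{c_\si,\si}$ is the unique positive root of $a\,b^*_{a,c_\si}=\si^{2}$, it suffices to verify $b^*_{a_\si,c_\si}=\si^{2}/a_\si=:b_\si$ (then $a_\si=a_{c_\si,\si}$ and $b_\si=b_{c_\si,\si}$ follow). From $c_\si=\tfrac{\ln b_\si}{1+a_\si}$ we get $e^{c_\si(1+a_\si)}=b_\si$, so $b^*_{a_\si,c_\si}=\tfrac{2(b_\si-1)}{c_\si}-a_\si$, and the claim reduces to $c_\si=\tfrac{2(b_\si-1)}{a_\si+b_\si}$. On the other hand $\ell_1(a_\si)=0$ reads $\ln(a_\si/\si^{2})=\tfrac{2(a_\si+1)(a_\si-\si^{2})}{a_\si^{2}+\si^{2}}$; substituting $\si^{2}=a_\si b_\si$ (whence $\si^{2}-a_\si=a_\si(b_\si-1)$ and $a_\si^{2}+\si^{2}=a_\si(a_\si+b_\si)$) this becomes $\ln b_\si=\tfrac{2(a_\si+1)(b_\si-1)}{a_\si+b_\si}$, i.e.\ exactly $\tfrac{\ln b_\si}{1+a_\si}=\tfrac{2(b_\si-1)}{a_\si+b_\si}$. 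This proves \eqref{eq:b_si=}; in particular $L_{W;c_\si,\si}=L_{W;\si}$.

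For \eqref{eq:c any}, note $W(-a_{c,\si})=-a_{c,\si}$, $W(b_{c,\si})=1$ and $b_{c,\si}=\si^{2}/a_{c,\si}$ give
\[
	L_{W;c,\si}=G(a_{c,\si},c),\qquad G(a,c):=\frac{\si^{2}e^{-ca}+a^{2}e^{c}}{\si^{2}+a^{2}} .
\]
A direct computation yields $G_{a}(a,c)=0\iff a\,b^*_{a,c}=\si^{2}$ (here $G_a,G_c$ denote partial derivatives), so $G_{a}(a_{c,\si},c)=0$, and $c\mapsto a_{c,\si}$ is smooth by the implicit function theorem applied to $a\,b^*_{a,c}=\si^{2}$ (its $a$‑derivative $b^*_{a,c}+a(2e^{c+ac}-1)$ is positive). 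Hence $\tfrac{d}{dc}L_{W;c,\si}=G_{c}(a_{c,\si},c)$, whose sign is that of $a_{c,\si}e^{c(1+a_{c,\si})}-\si^{2}$; eliminating the exponential via $a\,b^*_{a,c}=\si^{2}$ (which gives $a\,e^{c(1+a)}=\tfrac12(c(\si^{2}+a^{2})+2a)$) shows this sign equals that of $\rho(c):=c(\si^{2}+a_{c,\si}^{2})+2a_{c,\si}-2\si^{2}$. If $\rho(c)=0$ then also $a_{c,\si}e^{c(1+a_{c,\si})}=\si^{2}$, so $c=\tfrac{\ln(\si^{2}/a_{c,\si})}{1+a_{c,\si}}$; combining this with $\rho(c)=0$ in the form $c=\tfrac{2(\si^{2}-a_{c,\si})}{\si^{2}+a_{c,\si}^{2}}$ gives precisely $\ell_1(a_{c,\si})=0$. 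Since $a_{c,\si}=\si^{2}/b^*_{a_{c,\si},c}\in(0,\si^{2})$ and $\ell_1$ vanishes on $(0,\si^{2})$ only at $a_\si$ by Proposition~\ref{prop:win}(II), we get $a_{c,\si}=a_\si$ and hence $c=c_\si$; conversely $\rho(c_\si)=0$ because $a_{c_\si,\si}e^{c_\si(1+a_{c_\si,\si})}=a_\si b_\si=\si^{2}$ by \eqref{eq:b_si=}. Finally $\rho$ is continuous, $\rho(c)\to2(\sqrt{1+\si^{2}}-1-\si^{2})<0$ as $c\to0+$ (using $b^*_{a,c}\to a+2$, so $a_{c,\si}\to\sqrt{1+\si^{2}}-1<\si^{2}$) and $\rho(c)\to+\infty$ as $c\to\infty$ (since $a_{c,\si}\to0$); as $c_\si$ is the only zero of $\rho$, the intermediate value theorem forces $\rho<0$ on $(0,c_\si)$ and $\rho>0$ on $(c_\si,\infty)$, so $L_{W;\cdot,\si}$ is strictly decreasing on $(0,c_\si]$ and strictly increasing on $[c_\si,\infty)$. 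Together with $L_{W;c_\si,\si}=L_{W;\si}$ this gives \eqref{eq:c any}, its strictness, and the concluding ``if and only if''.

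The step I expect to be the main obstacle is this last minimization: the sign of $\tfrac{d}{dc}L_{W;c,\si}$ is not visibly constant on either side of $c_\si$, so rather than trying to prove $\rho$ monotone one must reduce the equation $\rho(c)=0$, through the defining relations for $a_{c,\si}$, $a_\si$ and $c_\si$, to the already‑known uniqueness of the root of $\ell_1$ in $(0,\si^{2})$, and then read off the sign pattern of $\rho$ from its continuity and its two endpoint limits. A secondary point is pinning down the equality case in \eqref{eq:c fixed} and keeping track of which of the constraints $\E X\ge0$, $\E X^{2}\le\si^{2}$ is binding.
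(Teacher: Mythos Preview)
Your proof of \eqref{eq:c fixed} uses exactly the same quadratic minorant as the paper: expanding $q(x)=e^c-\lambda(x-b)^2$ with your $\lambda$ gives precisely the paper's $G(x)=\al+\be x+\ga x^2$ with the coefficients \eqref{eq:al,be,ga}, and the convexity/tangency argument is the same. So that part is essentially identical.

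Where you diverge is in the order and method for \eqref{eq:b_si=} and \eqref{eq:c any}. The paper proves \eqref{eq:c any} first, by minimizing the two--parameter expression $\E\exp\{c\,W(X_{a,\si^2/a})\}=\frac{a^2e^c+\si^2e^{-ac}}{a^2+\si^2}$ over \emph{all} $c>0$ and $a\in(0,\si^2)$, not just $a=a_{c,\si}$: for fixed $a$ this is strictly convex in $c$ with unique minimizer $c=\frac{\ln(\si^2/a)}{1+a}$ and minimum $m(a,\si)$, and then $(\ln m)'(a)=\ell_1(a)/(1+a)^2$ together with Proposition~\ref{prop:win}(II) locates the minimum at $a=a_\si$. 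Having \eqref{eq:c any}, the paper then deduces \eqref{eq:b_si=} indirectly, by contradiction with the strictness of \eqref{eq:c fixed}. You reverse the logic: you verify \eqref{eq:b_si=} by a direct algebraic reduction of $b^*_{a_\si,c_\si}=b_\si$ to the relation $\ell_1(a_\si)=0$, and then minimize $c\mapsto L_{W;c,\si}$ itself via the envelope identity $\tfrac{d}{dc}L_{W;c,\si}=G_c(a_{c,\si},c)$ and a sign analysis of $\rho$, using continuity and endpoint limits to pin down the sign pattern. Both routes ultimately rest on the uniqueness of the root of $\ell_1$ in $(0,\si^2)$; the paper's is shorter because convexity in $c$ is immediate and no endpoint analysis is needed, while yours yields the extra information that $L_{W;c,\si}$ is strictly decreasing on $(0,c_\si]$ and strictly increasing on $[c_\si,\infty)$, and avoids the somewhat roundabout contradiction argument for \eqref{eq:b_si=}.
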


In addition to being zero-mean, each of the r.v.'s $X_{a_{c,\si},b_{c,\si}}$ and $X_{a_\si,b_\si}$ has variance $\si^2$, in view of \eqref{eq:b_c,si} and \eqref{eq:b_si}.  
Moreover, by \eqref{eq:a_si} and \eqref{eq:b_si}, $b_\si>1$ and hence $c_\si>0$ by \eqref{eq:c_si}. 
Thus, \eqref{eq:c fixed} provides an exact lower bound on $\E\exp\{c\,W(X)\}$ for a fixed $c>0$, while \eqref{eq:c any} provides an exact lower bound on $\E\exp\{c\,W(X)\}$ over all $c>0$. 

Let us now describe the asymptotics of the bounds $L_{W;c,\si}$ and $L_{W;\si}$ for $\si\downarrow0$ and $\si\to\infty$. As usual, we write $a\sim b$ if $\frac ab\to1$. 

\begin{proposition}\label{prop:win asymp} \ 

\begin{enumerate}[(I)]
	\item 
For any real $c>0$  
\begin{alignat}{2}
	L_{W;c,\si}-1&\sim\tfrac{-c^2}{4(e^c-1)}\,\si^2\,\ &&\text{as }\si\downarrow0, \label{eq:W,c;si->0} \\
	L_{W;c,\si}&\sim\tfrac{4e^c}{c^2}\,\tfrac{\ln^2\si}{\si^2}\ &&\text{as }\si\to\infty. \label{eq:W,c;si->infty} 
\end{alignat}
\item 
The expression 
\begin{equation}\label{eq:f}
	f(t):=\ln t +2 (1 - t) 
\end{equation}
switches in sign exactly once, from $-$ to $+$, as $t$ increases from $0$ to $1$;  
Therefore,
one can uniquely define $t_*$ by the formula 
\begin{equation}\label{eq:t}
	\{t_*\}=\big\{t\in(0,1)\colon f(t)=0\big\};  
\end{equation}
in fact, $t_*=0.203\dots$. 
\item
\begin{alignat}{2}
	L_{W;\si}-1&\sim-(1-t_*)t_*\,\si^2\,\ &&\text{as }\si\downarrow0, \label{eq:W;si->0} \\
	L_{W;\si}&\sim e^2\,\tfrac{\ln^2\si}{\si^2}\ &&\text{as }\si\to\infty, \label{eq:W;si->infty} 
\end{alignat}
\item Comparing \eqref{eq:W,c;si->0} with \eqref{eq:W;si->0}, and \eqref{eq:W,c;si->infty} with \eqref{eq:W;si->infty}: 
\begin{alignat}{2}
	\inf_{c>0}\tfrac{-c^2}{4(e^c-1)}&=-(1-t_*)t_*
	\ &&\text{-- attained at }c=-\ln t_*=1.593\dots; \label{eq:W,sup,si->0} \\
	\inf_{c>0}\tfrac{4e^c}{c^2}&=e^2\ &&\text{-- attained at }c=2. \label{eq:W,sup,si->infty}
\end{alignat}
Thus, the asymptotic expression in \eqref{eq:W;si->0} is the minimum in $c>0$ of that in \eqref{eq:W,c;si->0}, and the asymptotic expression in \eqref{eq:W;si->infty} is the minimum in $c>0$ of that in \eqref{eq:W,c;si->infty}. 
\end{enumerate}
\end{proposition}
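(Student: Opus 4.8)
The plan is to collapse each of $L_{W;c,\si}$ and $L_{W;\si}$ to one explicit function of a single implicitly–defined root and then run elementary asymptotics; parts (II) and (IV) are independent one‑variable calculus. For (II): $f'(t)=\tfrac1t-2$ is positive on $(0,\tfrac12)$, negative on $(\tfrac12,1)$, while $f(0+)=-\infty$, $f(\tfrac12)=1-\ln2>0$, $f(1)=0$, so $f<0$ on $(0,t_*)$ and $f>0$ on $(t_*,1)$ for a unique $t_*\in(0,\tfrac12)$, and a few bisection steps give $t_*=0.203\dots$. For (IV): $\inf_{c>0}\bigl(-c^2/(4(e^c-1))\bigr)=-\sup_{c>0}h(c)$ with $h(c):=c^2/(4(e^c-1))$; since $h(0+)=h(\infty)=0$ and $h'$ has the sign of $e^c(2-c)-2$, which vanishes at exactly one $c>0$, the supremum is attained there, and using $f(t_*)=0$ (i.e. $\ln t_*=2(t_*-1)$, so $2-c=2t_*$ and $e^c=1/t_*$ at $c=-\ln t_*$) one checks this point is $c=-\ln t_*$ with $h(-\ln t_*)=(1-t_*)t_*$; likewise $4e^c/c^2$ has derivative of the sign of $c-2$, giving minimum $e^2$ at $c=2$. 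The two ``Thus'' sentences are then mere restatements of \eqref{eq:W,sup,si->0}--\eqref{eq:W,sup,si->infty} in light of the coefficients found in (I) and (III).

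The core is the closed form. In the two‑point expectations the smaller atom $-a$ is fixed by $W$ and the larger atom is truncated to $1$: $b_{c,\si}=b^*_{a_{c,\si},c}>1$ because $2e^u-2>u$ for $u=c(1+a)>0$, and $b_\si=\si^2/a_\si>1$ by \eqref{eq:a_si}. Writing $a=a_{c,\si}$, $b=b_{c,\si}$, rewriting $a\,b^*_{a,c}=\si^2$ as $2e^{c(1+a)}=c(a+b)+2$, and using the zero–mean weights $\tfrac b{a+b},\tfrac a{a+b}$, a short computation collapses $\E\exp\{cW(X_{a,b})\}$ to
\[
	L_{W;c,\si}=\bigl(1+\tfrac c2\,a_{c,\si}\bigr)e^{-c\,a_{c,\si}},
\]
and, applying this with $c=c_\si$ and $a_{c_\si,\si}=a_\si$ (by \eqref{eq:b_si=}),
\[
	L_{W;\si}=\bigl(1+\tfrac{c_\si}2\,a_\si\bigr)e^{-c_\si\,a_\si}.
\]
For the limits I will also use $e^{-c\,a_{c,\si}}=2e^c\big/\bigl(c(a_{c,\si}+b_{c,\si})+2\bigr)$ (the same rewriting) and, from $\ell_1(a_\si)=0$ together with $a_\si b_\si=\si^2$ and $c_\si=\ln b_\si/(1+a_\si)$, the identities $c_\si=2(\si^2-a_\si)/(a_\si^2+\si^2)$ and $e^{-c_\si a_\si}=a_\si e^{c_\si}/\si^2$.

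It remains to pin down the roots. As $\si\downarrow0$: $a_{c,\si}=\si^2/b^*_{a_{c,\si},c}\le\si^2/b^*_{0,c}\to0$, hence $b^*_{a_{c,\si},c}\to b^*_{0,c}$ and $a_{c,\si}\sim\tfrac c{2(e^c-1)}\si^2$; and with $t_\si:=a_\si/\si^2\in(0,1)$, the function $t\mapsto\ell_1(\si^2t)$ tends pointwise on $(0,1)$ to $f(t)=\ln t+2(1-t)$, so comparing the sign patterns of $f$ (part (II)) and of $\ell_1$ (Prop.~\ref{prop:win}(II)) squeezes $t_\si\to t_*$, i.e. $a_\si\sim t_*\si^2$. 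As $\si\to\infty$: $a_{c,\si}\to\infty$, and since $b^*_{a,c}=\tfrac2c e^{c(1+a)}(1+o(1))$ as $a\to\infty$, taking logarithms in $a\,b^*_{a,c}=\si^2$ gives $c\,a_{c,\si}+\ln a_{c,\si}=2\ln\si+O(1)$, so $a_{c,\si}\sim\tfrac2c\ln\si$; for $a_\si$ I use the expansion $\ell_1(a)=\ln\tfrac a{\si^2}+2(a+1)+O(a^3/\si^2)$, uniform for $a=O(\ln\si)$, which makes $\ell_1\bigl((1-\epsilon)\ln\si\bigr)\to-\infty$ and $\ell_1\bigl((1+\epsilon)\ln\si\bigr)\to+\infty$ for every $\epsilon\in(0,1)$, whence the sign pattern of $\ell_1$ forces $(1-\epsilon)\ln\si<a_\si<(1+\epsilon)\ln\si$ eventually, i.e. $a_\si\sim\ln\si$. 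In each case $c_\si=2(\si^2-a_\si)/(a_\si^2+\si^2)$ then gives $c_\si\to2(1-t_*)$ as $\si\downarrow0$ and $c_\si\to2$ as $\si\to\infty$.

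Substituting into the closed forms finishes (I) and (III). As $\si\downarrow0$, $(1+\tfrac x2)e^{-x}=1-\tfrac x2+O(x^3)$ with $x=c\,a_{c,\si}\to0$ gives $L_{W;c,\si}-1=-\tfrac c2a_{c,\si}(1+o(1))\sim-\tfrac{c^2}{4(e^c-1)}\si^2$, and with $x=c_\si a_\si$, $L_{W;\si}-1\sim-\tfrac12\cdot2(1-t_*)t_*\,\si^2=-(1-t_*)t_*\si^2$. As $\si\to\infty$, $c\,b_{c,\si}=c\si^2/a_{c,\si}$ dominates the denominator of $e^{-c a_{c,\si}}$, so $e^{-c a_{c,\si}}\sim\tfrac{2e^c}{c\si^2}a_{c,\si}$ and $L_{W;c,\si}\sim\tfrac c2a_{c,\si}\cdot\tfrac{2e^c}{c\si^2}a_{c,\si}=\tfrac{e^c}{\si^2}a_{c,\si}^2\sim\tfrac{4e^c}{c^2}\tfrac{\ln^2\si}{\si^2}$, while $e^{-c_\si a_\si}=a_\si e^{c_\si}/\si^2$ gives $L_{W;\si}\sim\tfrac{c_\si}2a_\si\cdot\tfrac{a_\si e^{c_\si}}{\si^2}=\tfrac{c_\si e^{c_\si}}2\tfrac{a_\si^2}{\si^2}\sim e^2\tfrac{\ln^2\si}{\si^2}$. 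The only genuinely delicate step is the $\si\to\infty$ behaviour of the implicit root $a_\si$: whereas $a_{c,\si}$ is found by linearizing its equation under a logarithm, $a_\si$ must be located on the scale $a\asymp\ln\si$ by estimating $\ell_1$ there and squeezing, and it is precisely the fact that this scale is $\ln\si$ with constant exactly $1$ — so that $c_\si\to2$ — that makes the coefficient $e^2$ in \eqref{eq:W;si->infty} match $\inf_{c>0}4e^c/c^2$ from part (IV).
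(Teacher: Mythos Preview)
Your proof is correct and takes a genuinely different route from the paper. The paper never writes a closed form for $L_{W;c,\si}$; it works throughout with the raw two--point expectation $\tfrac{ae^c+be^{-ac}}{a+b}$ and expands each limit separately (e.g.\ \eqref{eq:E W}, \eqref{eq:E W infty}). Your key observation is that the defining relation $a\,b^*_{a,c}=\si^2$ rewrites as $c(a+b)+2=2e^{c(1+a)}$, which collapses that expectation to the single--variable expression $(1+\tfrac c2 a_{c,\si})e^{-c\,a_{c,\si}}$; all four asymptotics in (I) and (III) then fall out of the one Taylor expansion $(1+\tfrac x2)e^{-x}=1-\tfrac x2+O(x^3)$ together with the root asymptotics, which you obtain essentially as the paper does (the squeezing of $a_\si$ via the sign pattern of $\ell_1$ is the same idea, only you parametrize by $\ln\si$ instead of $\tfrac12\ln\si^2$). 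The auxiliary identities $c_\si=2(\si^2-a_\si)/(a_\si^2+\si^2)$ and $e^{-c_\si a_\si}=a_\si e^{c_\si}/\si^2$, extracted from $\ell_1(a_\si)=0$ and the definition of $c_\si$, are likewise not in the paper and let you read off the limits of $c_\si$ without passing through $b_\si$. Your treatment of (IV) is also a bit more explicit than the paper's: you identify the critical equation $e^c(2-c)=2$ directly, whereas the paper argues via the sign of $f(e^{-c})$. The upshot is that your closed--form route is shorter and unifies the four limits, while the paper's approach stays closer to the probabilistic expression and avoids the algebraic simplification step.
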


Note that the convergence for $\si\to\infty$ in Proposition~\ref{prop:win asymp} is very slow. E.g., the ratio $e^2\,\tfrac{\ln^2\si}{\si^2}/L_{W;\si}$ of the terms in \eqref{eq:W;si->infty} is $1.201\dots$ for 
$\si$ as large as $10^{10}$. 

The relations $a_\si\sim t_*\,\si^2$ as $\si\downarrow0$ and $a_\si\sim \frac12\ln(\si^2)$ as $\si\to\infty$, to be  established in the 
proof of part (III) of Proposition~\ref{prop:win asymp}, suggest, and numerical calculations confirm, that a good initial approximation for solving the equation $\ell_1(a)=0$ in \eqref{eq:a_si} for $a_\si$ is $a=\frac12\ln(1 + 2t_*\,\si^2)\approx\frac12\ln(1 + 0.406\,\si^2)$. 

\subsection{Truncation}\label{trunc}
Consider the truncation function $T$ defined by the formula 
\begin{equation}\label{eq:T}
	T(x):=x\ii{x<1}. 
\end{equation}

The following proposition allows one to define the terms in which to express the exact lower bounds on $\E e^{c\,T(X)}$. 

\begin{proposition}\label{prop:trunc} 
Take any real $c>0$. 
For any real $a$, let 
\begin{equation*}\label{eq:B*_a,c}
	B^*_{a,c}:=\tfrac{2(e^{ac}-1)-ac}c;  
\end{equation*}
cf.\ \eqref{eq:b*_a,c}. 
Then one can uniquely define $A_\si$ and $A_{c,\si}$ by the formulas 
\begin{align}
	\{A_c\}&=\big\{a>0\colon B^*_{a,c}=1\big\},  \label{eq:A_c}\\
	\{A_{c,\si}\}&=\big\{a>0\colon a\,B^*_{a,c}=\si^2\big\}, \label{eq:A_c,si}
\end{align}
because each of the equations on the right-hand sides of \eqref{eq:A_c} and \eqref{eq:A_c,si} has a unique root $a>0$. 
Moreover, one has the implication 
\begin{equation}\label{eq:A>A}
A_c\le\si^2\implies	A_{c,\si}\ge A_c.
\end{equation}
\end{proposition}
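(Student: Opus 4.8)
The plan is to reduce everything to the elementary monotonicity of the map $a\mapsto B^*_{a,c}$ on $[0,\infty)$ for fixed $c>0$. Writing $B^*_{a,c}=\tfrac{2e^{ac}-2-ac}c$, one has $B^*_{0,c}=0$ and $\tfrac{\partial}{\partial a}B^*_{a,c}=2e^{ac}-1\ge1>0$ for all $a\ge0$. Hence $B^*_{\cdot,c}$ is continuous and strictly increasing on $[0,\infty)$, running from $B^*_{0,c}=0$ up to $B^*_{a,c}\to\infty$ as $a\to\infty$; by the intermediate value theorem the equation $B^*_{a,c}=1$ then has exactly one root $a>0$, which is the $A_c$ of \eqref{eq:A_c}.

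Next I would treat $g(a):=a\,B^*_{a,c}$ on $[0,\infty)$. Here $g(0)=0$; since $B^*_{a,c}>0$ for $a>0$ and both factors tend to $\infty$, also $g(a)\to\infty$ as $a\to\infty$; and $g'(a)=B^*_{a,c}+a(2e^{ac}-1)>0$ for $a>0$. So $g$ is continuous and strictly increasing from $0$ to $\infty$, and therefore $g(a)=\si^2$ has exactly one root $a>0$ (note $a=0$ is excluded since $\si>0$), which is the $A_{c,\si}$ of \eqref{eq:A_c,si}.

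The implication \eqref{eq:A>A} then follows with no extra work: by the defining property of $A_c$ we have $g(A_c)=A_c\,B^*_{A_c,c}=A_c\cdot1=A_c$, so the hypothesis $A_c\le\si^2$ says precisely that $g(A_c)\le\si^2=g(A_{c,\si})$; since $g$ is strictly increasing and hence order-preserving, this yields $A_c\le A_{c,\si}$.

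I do not expect a genuine obstacle here: the whole argument rests on the single inequality $2e^{ac}-1\ge1$ on $[0,\infty)$, and the only points needing a little care are the behaviour at the endpoint $a=0$ (which is what makes ``unique positive root'' the correct phrasing) and the limits as $a\to\infty$. This is the truncation analogue of part (I) of Proposition~\ref{prop:win}: $B^*_{a,c}$ is obtained from $b^*_{a,c}$ in \eqref{eq:b*_a,c} by replacing $e^{c+ac}$ with $e^{ac}$, and the monotonicity proof carries over verbatim to this simpler setting.
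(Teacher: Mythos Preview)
Your argument is correct and follows essentially the same route as the paper: the paper asserts (without writing down the derivatives) that both $B^*_{a,c}$ and $aB^*_{a,c}$ strictly and continuously increase from $0$ to $\infty$, and then observes for \eqref{eq:A>A} that the value of $aB^*_{a,c}$ at $a=A_c$ is $A_c$ --- exactly your computation $g(A_c)=A_c$. You have simply supplied the explicit derivative calculations that the paper leaves to the reader.
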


We shall need one more definition: 
\begin{equation*}\label{eq:B_c,si}
	B_{c,\si}:=\si^2/A_{c,\si}.  
\end{equation*}

\begin{theorem}\label{th:trunc}
For any real $c>0$ 
\begin{equation}\label{eq:trunc}
	\E\exp\{c\,T(X)\}\ge
	L_{T;c,\si}:=
	\left\{
\begin{alignedat}{2}	
	\E\exp\{c\,T\big(X_{\si^2,1}\big)\} 						&
	&&\text{ if }\si^2\le A_c, \\
	\E\exp\{c\,T\big(X_{A_{c,\si},B_{c,\si}}\big)\} &
&&\text{ if }\si^2\ge A_c. 
\end{alignedat}
\right. 
\end{equation}
Moreover, inequality \eqref{eq:trunc} is strict unless $X$ equals $X_{\si^2,1}$ or $X_{A_{c,\si},B_{c,\si}}$ in distribution, depending on whether $\si^2\le A_c$ or $\si^2\ge A_c$. 
\end{theorem}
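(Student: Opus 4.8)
The plan is to mirror the proof of Theorem \ref{th:win} (which in turn presumably rests on the Lagrange/Chebyshev-type duality for linear-functional optimization subject to moment constraints), exploiting convexity of $e^{cx}$ and the structure of $T(x)=x\ii{x<1}$. First I would fix $c>0$ and seek the infimum of $\E e^{c\,T(X)}$ over all r.v.'s $X$ with $\E X\ge0$ and $\E X^2\le\si^2$. The key observation is that the map $x\mapsto g(x):=e^{c\,T(x)}$ is bounded on $[1,\infty)$, where it equals $1$, and that on $(-\infty,1)$ it is convex and increasing; so to make $\E g(X)$ small one wants to push mass leftward as far as the second-moment budget allows, while placing the remaining mass at the ``cheap'' atom $x=1$ (not beyond, since that gains nothing for $g$ but wastes variance). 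This suggests the extremal $X$ is supported on a two-point set $\{-a,b\}$ with $b\ge1$; writing $b=1$ degenerately when the variance budget is tight, and $b>1$ otherwise — which is exactly the dichotomy $\si^2\le A_c$ versus $\si^2\ge A_c$ appearing in the statement.

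The technical heart is the construction of a tangent affine (or affine-plus-correction) minorant. I would look for constants $\lambda_0,\lambda_1,\lambda_2$ with $\lambda_2\ge0$ such that
\begin{equation*}
	e^{c\,T(x)}\ge \lambda_0+\lambda_1 x-\lambda_2 x^2\quad\text{for all }x\in\R,
\end{equation*}
with equality at the support points of the candidate extremal distribution. Taking expectations then gives $\E e^{c\,T(X)}\ge\lambda_0+\lambda_1\E X-\lambda_2\E X^2\ge\lambda_0-\lambda_2\si^2$ (using $\lambda_1\ge0$, $\E X\ge0$, $\lambda_2\ge0$, $\E X^2\le\si^2$), and one checks the right-hand side equals $L_{T;c,\si}$. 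The role of $B^*_{a,c}=\tfrac{2(e^{ac}-1)-ac}c$ is precisely the ``matching'' condition: for a two-point distribution on $\{-a,b\}$ with mean zero, the parabola through the three contact constraints forces $b=B^*_{a,c}$, and then $a\,b=\si^2$ pins down $a=A_{c,\si}$ — this is the content of Proposition \ref{prop:trunc}, which I would invoke for existence/uniqueness of $A_c$ and $A_{c,\si}$ and for the implication \eqref{eq:A>A} that tells us which regime we are in. In the regime $\si^2\le A_c$, the variance constraint is not binding through a free $b$; instead $b$ is forced down to $1$, the second atom sits at the kink of $T$, and the extremal law is $X_{\si^2,1}$, which has $\E X^2=\si^2$ exactly.

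The main obstacle I anticipate is verifying the global minorization inequality $e^{c\,T(x)}\ge\lambda_0+\lambda_1 x-\lambda_2 x^2$ on all of $\R$ in both regimes — in particular across the discontinuity of $T$ at $x=1$, where $e^{c\,T(x)}$ jumps down from $e^{c}$ to $1$, so the quadratic minorant must lie below the \emph{lower} value $1$ for $x\ge1$ while being tangent to the smooth branch $e^{cx}$ for $x$ slightly less than $1$; this forces careful sign analysis of the quadratic $\lambda_0+\lambda_1 x-\lambda_2 x^2-1$ on $[1,\infty)$ and of $\lambda_0+\lambda_1 x-\lambda_2 x^2-e^{cx}$ on $(-\infty,1)$, a convexity-versus-concavity argument that is delicate precisely because $-\lambda_2 x^2$ makes the minorant eventually concave. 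Once the minorant is in hand, the equality/strictness clause follows by tracing when all the intermediate inequalities ($\E X\ge0$ used with $\lambda_1>0$; $\E X^2\le\si^2$ used with $\lambda_2>0$; pointwise minorization) are simultaneously tight, which happens exactly when $X$ is concentrated on the contact set, i.e.\ $X\D X_{\si^2,1}$ or $X\D X_{A_{c,\si},B_{c,\si}}$ according to the regime. I would finish by checking the boundary case $\si^2=A_c$, where $B_{c,\si}=B^*_{A_{c,\si},c}=1$ and $A_{c,\si}=A_c=\si^2$, so the two formulas for $L_{T;c,\si}$ agree and the extremal distributions coincide.
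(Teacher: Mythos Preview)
Your plan is correct and matches the paper's proof almost exactly: the paper also builds a concave quadratic minorant $G(x)=\alpha+\beta x+\gamma x^2$ with $\beta>0>\gamma$, with contact at the two support points of the extremal law, splits into the regimes $\si^2\le A_c$ (contact set $\{-\si^2,1\}$) and $\si^2\ge A_c$ (contact set $\{-A_{c,\si},B_{c,\si}\}$), and then argues $\E F(X)\ge\E G(X)\ge\E G(X_{a,b})=\E F(X_{a,b})$ together with the strictness analysis you describe.

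One point in your description would lead you astray if taken literally: in neither regime is the quadratic tangent to the smooth branch $e^{cx}$ ``for $x$ slightly less than $1$.'' Tangency on the left branch is always at $x=-a$, far from $1$. On the right, in the small-$\si$ regime the quadratic merely \emph{touches} $F$ at $x=1$ with $D(1)=0$ and $D'(1+)\ge0$ (the latter being equivalent to $B^*_{a,c}\le1$, i.e.\ $a\le A_c$), while in the large-$\si$ regime it is tangent to the constant branch $F\equiv1$ at $x=b=B_{c,\si}\ge1$, so $D(b)=D'(b)=0$. Getting these contact conditions right---three conditions in the first case, four in the second---is precisely what makes the piecewise-convexity argument for $D=F-G\ge0$ go through on each of $(-\infty,1)$ and $[1,\infty)$.
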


To complete this subsection, let us describe the asymptotics of the bound $L_{T;c,\si}$ for $\si\downarrow0$ and $\si\to\infty$ -- cf.\ Proposition~\ref{prop:win asymp}. 

\begin{proposition}\label{prop:trunc asymp} 
For any real $c>0$
\begin{alignat}{2}
	L_{T;c,\si}-1&\sim-c\,\si^2\,\ &&\text{as }\si\downarrow0, \label{eq:T,c;si->0} \\
	L_{T;c,\si}&\sim\tfrac4{c^2}\,\tfrac{\ln^2\si}{\si^2}\ &&\text{as }\si\to\infty. \label{eq:T,c;si->infty}
\end{alignat}
\end{proposition}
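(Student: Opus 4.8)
The plan is to deal with the limits $\si\downarrow0$ and $\si\to\infty$ separately, in each regime first identifying which branch of \eqref{eq:trunc} is active and then writing $L_{T;c,\si}$ out explicitly in terms of the two-atom extremal distribution before expanding. For $\si\downarrow0$: since $c$ is fixed, $A_c$ is a fixed positive number by Proposition~\ref{prop:trunc}, so $\si^2\le A_c$ for all small $\si$ and hence $L_{T;c,\si}=\E\exp\{c\,T(X_{\si^2,1})\}$. I would then simply compute this: $X_{\si^2,1}$ puts masses $\frac1{1+\si^2}$ and $\frac{\si^2}{1+\si^2}$ at $-\si^2$ and $1$, and $T(-\si^2)=-\si^2$, $T(1)=0$, so that $L_{T;c,\si}=\frac{e^{-c\si^2}+\si^2}{1+\si^2}$ and therefore $L_{T;c,\si}-1=\frac{e^{-c\si^2}-1}{1+\si^2}\sim-c\,\si^2$, which is \eqref{eq:T,c;si->0}. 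This part is entirely routine.

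For $\si\to\infty$ we are on the other branch, $\si^2\ge A_c$, so $L_{T;c,\si}=\E\exp\{c\,T(X_{A_{c,\si},B_{c,\si}})\}$ with $B_{c,\si}=\si^2/A_{c,\si}$, and the first task is the asymptotics of $A_{c,\si}$. Writing $a=A_{c,\si}$, I would rewrite the defining relation $a\,B^*_{a,c}=\si^2$ as $e^{ac}=\frac{c\si^2}{2a}+\frac{ac}2+1$. Since $a\mapsto a\,B^*_{a,c}$ is continuous, strictly increasing on $(0,\infty)$ and unbounded, $A_{c,\si}\to\infty$; hence the term $\frac{c\si^2}{2a}$ dominates the right-hand side, giving $e^{ac}=\frac{c\si^2}{2a}(1+o(1))$, i.e.\ $ac+\ln a=2\ln\si+\ln\frac c2+o(1)$. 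A one-step bootstrap — $a\to\infty$ forces $\ln a=o(ac)$ — then yields $A_{c,\si}\sim\frac2c\ln\si$, and, feeding this back, $e^{-cA_{c,\si}}\sim\frac{2A_{c,\si}}{c\si^2}$.

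With these in hand I would finish as follows. Since $B_{c,\si}=\si^2/A_{c,\si}\to\infty$, for all large $\si$ we have $T(B_{c,\si})=0$ and $T(-A_{c,\si})=-A_{c,\si}$, so $L_{T;c,\si}=\frac{B_{c,\si}}{A_{c,\si}+B_{c,\si}}\,e^{-cA_{c,\si}}+\frac{A_{c,\si}}{A_{c,\si}+B_{c,\si}}$. Here $\frac{A_{c,\si}}{B_{c,\si}}=\frac{A_{c,\si}^2}{\si^2}\to0$, so $\frac{B_{c,\si}}{A_{c,\si}+B_{c,\si}}\to1$ and $\frac{A_{c,\si}}{A_{c,\si}+B_{c,\si}}\sim\frac{A_{c,\si}^2}{\si^2}$; the first summand is $\sim e^{-cA_{c,\si}}\sim\frac{2A_{c,\si}}{c\si^2}$, which is smaller than the second by the factor $\frac2{cA_{c,\si}}\to0$. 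Thus the second summand dominates and $L_{T;c,\si}\sim\frac{A_{c,\si}^2}{\si^2}\sim\frac{4\ln^2\si}{c^2\si^2}$, which is \eqref{eq:T,c;si->infty}.

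The one genuinely nonroutine step is the Lambert-$W$-type asymptotics $A_{c,\si}\sim\frac2c\ln\si$: I would have to argue carefully that the corrections $\ln a$ and $\frac{ac}2+1$ really are of lower order than $2\ln\si$ and $\frac{c\si^2}{2a}$, respectively — this is exactly where $A_{c,\si}\to\infty$ (equivalently, the monotonicity from Proposition~\ref{prop:trunc}) is used. Everything else reduces to elementary expansions and a comparison of the two atoms' contributions, and this analysis of $A_{c,\si}$ parallels, but is simpler than, the analysis of $a_\si$ underlying part (III) of Proposition~\ref{prop:win asymp}.
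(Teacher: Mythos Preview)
Your proof is correct and follows the same approach as the paper: identify the active branch of \eqref{eq:trunc}, then expand the two-atom distribution. For $\si\downarrow0$ your computation matches the paper's verbatim; for $\si\to\infty$ the paper does not write out the details but simply says to rerun the argument for \eqref{eq:W,c;si->infty} with $B^*_{a,c},A_{c,\si},B_{c,\si},T$ in place of $b^*_{a,c},a_{c,\si},b_{c,\si},W$ and with the factor $e^c$ dropped --- and that rerun is exactly what you spell out (the Lambert-$W$-type step $A_{c,\si}\sim\frac2c\ln\si$ and the dominance $\frac{a}{a+b}\sim\frac ab$). One small slip in your closing commentary: the parallel is with the analysis of $a_{c,\si}$ in part~(I) of Proposition~\ref{prop:win asymp}, not with $a_\si$ in part~(III).
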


\subsection{Winsorization and truncation: discussion and comparison}\label{win/trunc}

The Winsorization function $W$  
and the truncation function $T$ as defined by \eqref{eq:W} and \eqref{eq:T} ``cut'' a given value $x$ at the level $1$. However, by simple rescaling it is easy to restate the results for any positive ``cut'' level $y$. Indeed, one may consider $W_y(x):=y\wedge x$ and $T_y(x):=x\ii{x<y}$, so that $W_1=W$ and $T_1=T$. Then $c\,W_y(X)=c\,y\,W(X/y)$ and $c\,T_y(X)=c\,y\,T(X/y)$. 
Now one can use the results of Subsections~\ref{win} and \ref{trunc} with $c$, $X$, and $\si$ replaced by $c\,y$, $X/y$, and $\si/y$, respectively. 
It should therefore be clear that the ``cut'' level was set to be $1$ just for the simplicity of presentation. 

Observe that for each $c>0$ the exact lower bound $L_{W;c,\si}$ in \eqref{eq:c fixed} is no greater than $1$, since the zero r.v.\ $X$ obviously satisfies the conditions $\E X\ge0$ and $\E X^2\le\si^2$. Hence, the exact lower bounds $L_{W;\si}$ and $L_{T;c,\si}$, which are no greater than $L_{W;c,\si}$, are as well no greater than $1$. It is also clear that each of these exact lower bounds is nondecreasing in $\si$ -- since the exactness is over all r.v.'s $X$ with $\E X\ge0$ and $\E X^2\le\si^2$. 


\begin{wrapfigure}{l}{0.50\textwidth}
\includegraphics[width=0.52\textwidth]{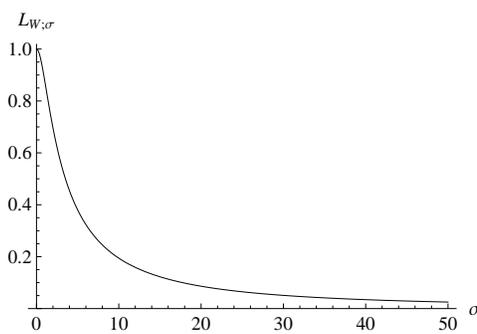}
	\caption{The exact lower bound $L_{W;\si}$.}
	\label{fig:c any}
	\vspace*{-10pt}
\end{wrapfigure}



However, for any $c>0$ the exact lower bound $L_{W;c,\si}$ for the Winsorized r.v.\ $W(X)$ decreases rather slowly from $1$ to $0$ as $\si$ increases from $0$ to $\infty$. Even the smaller, universal over all $c>0$ exact lower bound $L_{W;\si}$ decreases rather slowly; see Figure~\ref{fig:c any} and also recall Proposition~\ref{prop:win asymp}. 
In particular, for the value $\si^2=1$ (which is of special interest as far as the application in \cite{nonlin} is concerned) the lower bound $L_{W;\si}$ is $0.878\dots$, rather close to $1$. 
Even for $\si^2=100$, this bound is $0.194\dots$, not very small. 

Moreover, the universal (over all $c>0$) Winsorization bound $L_{W;\si}$ is remarkably close to the fixed-$c$ Winsorization bounds $L_{W;c,\si}$, especially if the value of $c$ is in the interval $[1,3]=\{\frac p2\colon 2\le p\le 6\}$ -- which is of particular interest in \cite{nonlin}. See the picture at the top of  Figure~\ref{fig:ratios}; the green graph there, for $c=2$ -- cf.\ \eqref{eq:W,sup,si->infty} -- looks exactly horizontal at level $1$, but it is in fact not.  

\begin{wrapfigure}{l}{0.48\textwidth}
\includegraphics[width=0.50\textwidth]{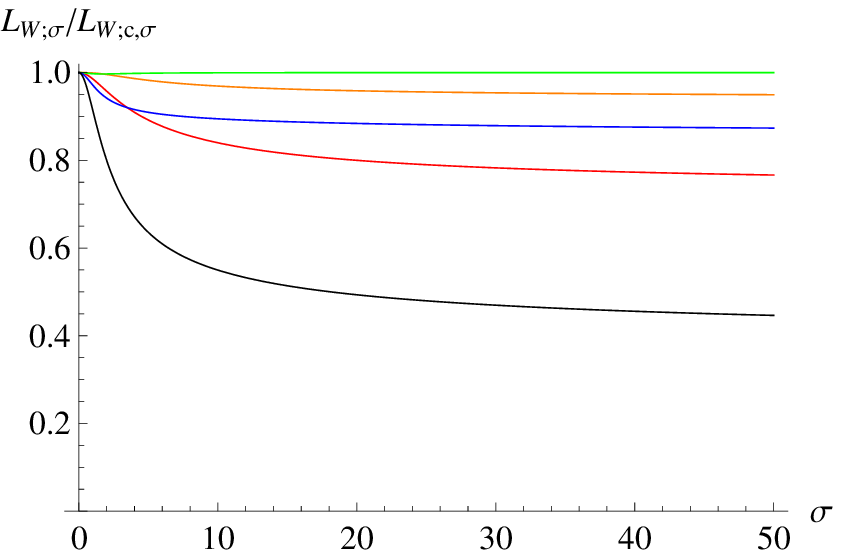}

\vspace*{8pt}
\includegraphics[width=0.50\textwidth]{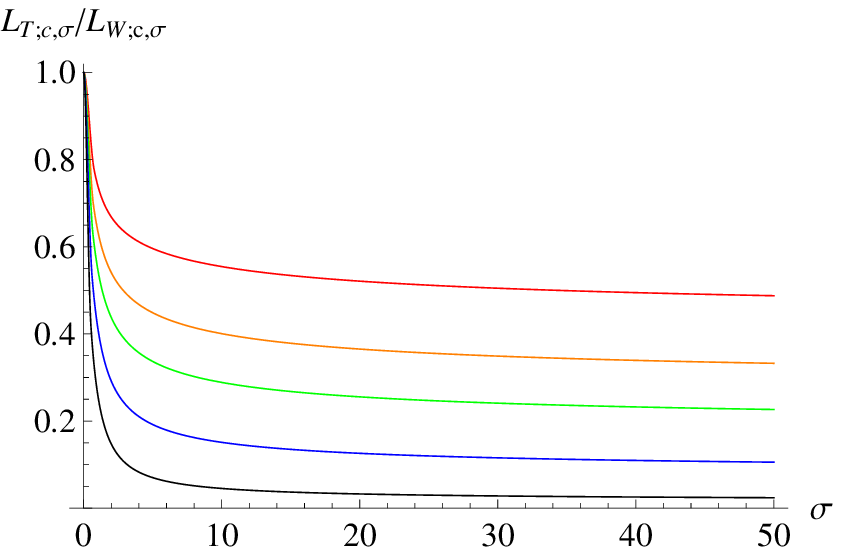}
	\caption{The ratios $L_{W;\si}/L_{W;c,\si}$ (above) and $L_{T;c,\si}/L_{W;c,\si}$ (below) for $c=1,\frac32,2,3,5$ -- red, orange, green, blue, black, respectively.}
	\label{fig:ratios}
		\vspace*{-5pt}
\end{wrapfigure}


As for the truncation case, it is quite different from the Winsorization one. Indeed, the exact lower bound $L_{T;c,\si}$ is significantly smaller than $L_{W;c,\si}$, especially for larger values of $\si$. The bottom picture of Figure~\ref{fig:ratios} 
shows the graphs of the ratios of these two bounds for $c=1,\frac32,2,3,5$. 

It is also easy to compare the asymptotics for $L_{T;c,\si}$ in \eqref{eq:T,c;si->0} and \eqref{eq:T,c;si->infty} with that for $L_{W;c,\si}$ in \eqref{eq:W,c;si->0} and \eqref{eq:W,c;si->infty}. 
Comparing \eqref{eq:W,c;si->0} with \eqref{eq:T,c;si->0}, it is easy to see that for $\si\downarrow0$ one has $1-L_{T;c,\si}$ is at least $4$ times as large (asymptotically) as $1-L_{W;c,\si}$, and may be infinitely many times as large when $c$ goes to $0$ or $\infty$. Similarly, for $\si\to\infty$, $L_{W;c,\si}$ is $e^c$ times as large (asymptotically) as $L_{T;c,\si}$.


Moreover, in contrast with the Winsorization case, there is no nontrivial lower bound in the truncation case that would be universal over all $c>0$. Namely, for any given $\si>0$, the infimum of $\E\exp\{c\,T(X)\}$ over all $c>0$ and all r.v.'s $X$ with $\E X\ge0$ and $\E X^2\le\si^2$ is $0$; the same holds even if the conditions $\E X\ge0$ and $\E X^2\le\si^2$ are strengthened to $\E X=0$ and $\E X^2=\si^2$. 
Indeed, let $a\downarrow0$, $b:=\si^2/a$, and $c:=1/a^2$; then it is easy to see that $\E\exp\{c\,T(X_{a,b})\}\to0$; cf.\ \eqref{eq:T,c;si->0} and \eqref{eq:T,c;si->infty}, with large $c$. 

The general problem of finding the maximum or minimum of the generalized moment $\int f\dd\mu$ over the set of all nonnegative measures with given generalized moments $\int f_i\dd\mu$ ($i\in I$) 
goes back to Chebyshev and Markov; here a function $f$ and a family of functions $(f_i)_{i\in I}$ are given; see, e.g., \cite{krein-nudelman,karlin-studden,hoeff-extr,kemper-dual,karr,pin98}. One group of results in this area is that for finite $I$ under general conditions it may be assumed without loss of generality that the support of $\mu$ is also finite, with cardinality no greater than that of $I$; methods based on such results may be referred to as finite-support methods. Other results, valid for finite or infinite $I$, concern the following duality: under general conditions, 
the supremum (say) of $\int f\dd\mu$ over all $\mu$ such that $\int f_i\dd\mu\le c_i$ for all $i\in I$ coincides with the infimum of $\int_I c_i\,\nu(\!\dd i)$ over all nonnegative measures $\nu$ on $I$ (say with a finite support) such that $\int_I f_i\,\nu(\!\dd i)\ge f$. 

Such methods were used e.g.\ in 
\cite{bennett,hoeff63,pin-utev-exp,utev-extr,bent-liet02,bent-hoeff,pin-hoeff,tyurin}. In particular, the supremum of $\int_{\R} e^x\,\mu(\!\dd x)$ given $\int_y^\infty\mu(\!\dd x)=0$, $\int_{\R} x\,\mu(\!\dd x)=0$, and $\int_{\R} x^2\,\mu(\!\dd x)=\si^2$ was found (implicitly) in \cite{bennett} and (explicitly) in \cite{pin-utev-exp}. 

In \cite{hoeff63} a similar problem was solved, under the additional restriction that $\mu$ is a probability measure. This result was extended in \cite{bent-liet02,bent-hoeff} to the Eaton-like moment functions $(\cdot-t)_+^2$ ($t\in\R)$ in place of the exponential function $e^\cdot$ in \cite{hoeff63}; on the other hand, this was a further development of the line of results obtained in \cite{eaton1,eaton2,pin98,pin99}. 
The results of \cite{bennett,hoeff63} and \cite{bent-liet02,bent-hoeff} were refined in \cite{pin-utev-exp} and \cite{pin-hoeff}, respectively, by also taking into account positive-part third moments. 
The supremum of the moments $\int f\dd\mu$ over all Stein-type moment functions $x\mapsto f(x):=xg(x)-g'(x)$ with Lipschitz-1 functions $g'$ and over all probability measures $\mu$ with 
given mean, variance, and third absolute moment  
was presented (in an equivalent form) in \cite[Theorem~3]{tyurin}. 
%
Results somewhat related to the mentioned ones were obtained in \cite{disintegr}; see also the bibliography therein. 
Of course, mentioned above are a very small sample of the work done on the Chebyshev-Markov type of extremal problems. 

Concerning our problems of minimizing the exponential moments of $W(X)$ and $T(X)$, 
one could use mentioned finite-support methods to reduce the consideration to r.v.'s $X$ taking only three values, since we have here three affine restrictions: on the first two moments and on the total mass of the measure (which has to be a probability measure). Another, more ad hoc kind of approach would be to condition the distribution of the r.v.\ $X$ on $\ii{X<1}$, which would preserve the mean and would not increase the second moment; also, this conditioning would not increase the exponential moments of $W(X)$ and $T(X)$, since both functions, $e^{cW}$ and $e^{cT}$, are convex on $(-\infty,1)$ and on $[1,\infty)$; thus, it would remain to consider r.v.'s $X$ taking only two values. 
However, the duality-type method that we chose to prove (in the next section) inequalities \eqref{eq:c fixed} and \eqref{eq:trunc} appears more effective, as it immediately reduces the consideration to r.v.'s $X_{a,b}$ that, not only take just two values, but also have the first two moments exactly equal to $0$ and $\si^2$, respectively; moreover, this approach appears more convenient in obtaining the strictness conditions for inequalities \eqref{eq:c fixed} and \eqref{eq:trunc}. 

%
%
%
%
%

\section{Proofs}\label{proofs}

\begin{proof}[Proof of Proposition~\ref{prop:win}]\ 

(I).\quad Part (I) follows because $ab^*_{a,c}$ strictly and continuously increases from $0$ to $\infty$ as $a$ increases from $0$ to $\infty$. 

(II).\quad 
Observe that $a (a^2 + \si^2)^2 \ell_1'(a)$ is a quadratic polynomial in $\si^2$, whence one can see that the system of inequalities $\ell_1'(a)>0$ and $0<a<\si^2$ can be rewritten as $0 < a < \sqrt{1 + \si^2}-1$. 
This means that $\ell_1(a)$ switches from increase to decrease over $a\in(0,\si^2)$; at that, $\ell_1(0+)=-\infty$ and $\ell_1(\si^2)=0$. Now part (II) of Proposition~\ref{prop:win} follows as well. 
\end{proof}

\begin{proof}[Proof of Theorem~\ref{th:win}]
Let 
\begin{equation}\label{eq:F,G}
F(x):=e^{c\,W(x)}\quad\text{and}\quad G(x):=\al+\be x+\ga x^2	
\end{equation}
for all $x\in\R$, 
where 
\begin{equation}\label{eq:al,be,ga}
	\al:=e^c-\frac{b^2 c e^{-a c}}{2 (a+b)},\quad 
\be:=\frac{b c e^{-a c}}{a+b},\quad
\ga:=-\frac{c e^{-a c}}{2 (a+b)}, 
\end{equation}
$a>0$, $b>1$, $c>0$. 
Then it is straightforward to check that 
$F(b)=G(b)$, $F'(b)=G'(b)$, and $F'(-a)=G'(-a)$.  
Let now $b=b^*_{a,c}$. Then
$b>2>1$ and $F(-a)=G(-a)$, so that 
\begin{equation}
	F(b)=G(b),\quad F'(b)=G'(b),\quad F(-a)=G(-a),\quad
	F'(-a)=G'(-a).   
\end{equation} 

Also, by \eqref{eq:al,be,ga}, $\ga<0$ and hence the function $G$ is strictly concave, while the function $F$ is convex on
$(-\infty,1)$ and on $[1,\infty)$; so, the difference $D:=F-G$ is strictly convex on $(-\infty,1)$ and on $[1,\infty)$; at that, $D(-a)=D'(-a)=D(b)=D'(b)=0$, whence   
$D>0$ and $F>G$ on $\R\setminus\{-a,b\}$, while $F=G$ on the two-point set $\{-a,b\}$.   
Now specify $a$, to $a=a_{c,\si}$. Then, recalling \eqref{eq:b_c,si} and \eqref{eq:a_c,si}, one sees that 
\begin{equation}\label{eq:b=b}
	b^*_{a_{c,\si},c}=b_{c,\si}.  
\end{equation}
Therefore, also specifying $b$ to $b=b_{c,\si}$, one has 
\begin{equation}\label{eq:ge}
\E e^{c\,W(X)}=\E F(X)\ge\E G(X)\ge\E G(X_{a,b})=\E F(X_{a,b})
=\E\exp\big\{c\,W(X_{a,b})\big\}; 
\end{equation}
the second inequality here takes place because \big(in view of \eqref{eq:al,be,ga}\big) $\be>0>\ga$, while $\E X\ge0=\E X_{a,b}$ and $\E X^2\le\si^2=ab=\E X_{a,b}$.  
Thus, \eqref{eq:c fixed} follows. 
Moreover, because 
$F>G$ on $\R\setminus\{-a,b\}$, the first inequality in \eqref{eq:ge} is strict unless the support of the distribution of $X$ is a subset of $\{-a,b\}$, and the second inequality in \eqref{eq:ge} is strict unless $\E X=0$ and $\E X^2=\si^2=ab$; thus indeed, inequality \eqref{eq:c fixed} is strict unless $X\D X_{a,b}$. 

To prove inequality \eqref{eq:c any}, it suffices to show that $\E\exp\big\{c_\si\,W(X_{a_\si,b_\si})\big\}$ is a lower bound on $\E\exp\big\{c\,W(X_{a,b})\big\}$ for any $a$ and $b$ such that $a\in(0,\si^2)$ and $b=\si^2/a$. 
Take indeed any such $a$ and $b$. Observe that $\E\exp\big\{c\,W(X_{a,b})\big\}
=\frac{a^2 e^c+e^{-a c} \si^2}{a^2+\si^2}$ is strictly convex in $c\in\R$ and attains its minimum,  
\begin{equation*}
	m(a,\si):=\frac{a (1+a) \left(\frac{a}{\si^2}\right)^{-\frac{1}{1+a}}}{a^2+\si^2},
\end{equation*}
in $c$ only at $c=\frac{\ln(\si^2/a)}{1+a}=\frac{\ln b}{1+a}$ -- cf.\ \eqref{eq:c_si}. 
Next, the minimum of $m(a,\si)$ or, equivalently, of 
\begin{equation*}
	\ell(a):=\ell(a,\si):=\ln m(a,\si)
\end{equation*}
in $a\in(0,\si^2)$ is attained only at the point $a=a_\si$ defined by \eqref{eq:a_si}, because,  
by part (II) of Proposition~\ref{prop:win}, $\ell'(a)=\ell_1(a)/(1+a)^2$ 
switches in sign from $-$ to $+$ over $a\in(0,\si^2)$. 
Thus, 
inequality \eqref{eq:c any} is true, and it is strict unless $c=c_\si$ and $a_{c,\si}=a_\si$. 

It remains to verify the three equalities in \eqref{eq:b_si=}. 
In view of \eqref{eq:b=b}, the last of these equalities is implied by the first one. 
So, if any of the equalities in \eqref{eq:b_si=} were false, then the equality $X_{a_\si,b_\si}\D X_{a_{c_\si,\si},b_{c_\si,\si}}$ would also be false, and so, by what has been proved, inequality \eqref{eq:c fixed} with $c_\si$ and $X_{a_\si,b_\si}$ in place of $c$ and $X$ would be strict, which would contradict inequality \eqref{eq:c any}. 

This completes the proof of Theorem~\ref{th:win}.    
\end{proof}


\begin{proof}[Proof of Proposition~\ref{prop:win asymp}]\ 

(I)\quad To prove part (I), consider first the case $\si\downarrow0$. Then, by \eqref{eq:a_c,si} and \eqref{eq:b*_a,c}, $a_{c,\si}\downarrow0$. Moreover, $b^*_{a,c}\to\frac{2(e^c-1)}c$ whenever $a\to0$. So, by \eqref{eq:b=b}, $b_{c,\si}\to\frac{2(e^c-1)}c>2>1$, and so, $a_{c,\si}=\si^2/b_{c,\si}\sim\frac c{2(e^c-1)}\,\si^2$. 
On the other hand, 
\begin{align}\label{eq:E W}
	\E\exp\{c\,W(X_{a,b})\}-1=(e^c-1)\,\tfrac a{a+b}+(e^{-ca}-1)\,\tfrac b{a+b}&\sim(\tfrac{e^c-1}b-c)a 
\end{align}
whenever 
$a\downarrow0$ and $a=o(b)$. 
This, together with the relations $b_{c,\si}\to\frac{2(e^c-1)}c$ and $a_{c,\si}\sim\frac c{2(e^c-1)}\,\si^2$, implies 
\eqref{eq:W,c;si->0}. 

Consider now the case $\si\to\infty$. Then, by \eqref{eq:a_c,si} and \eqref{eq:b*_a,c}, $a_{c,\si}\to\infty$. Next, 
\begin{equation}\label{eq:b*}
b^*_{a,c}\sim\tfrac{2e^c}c\,e^{ac}\quad\text{as }a\to\infty. 	
\end{equation}
So, in view of \eqref{eq:b_c,si} and \eqref{eq:b=b}, 
for $a=a_{c,\si}$ one has 
\begin{equation}\label{eq:si^2}
\si^2\sim\tfrac{2e^c}{c^2}\,ac\,e^{ac}=e^{ac\,(1+o(1))},	
\end{equation}
whence $a_{c,\si}=a\sim\frac1c\,\ln(\si^2)$ and $b_{c,\si}=\si^2/a_{c,\si}\sim c\si^2/\ln(\si^2)$. 
Also, for $a\to\infty$ and $b=b^*_{a,c}$ \eqref{eq:b*} yields $a=o(b)$ and 
\begin{equation}\label{eq:E W infty}
	\E\exp\{c\,W(X_{a,b})\}=\tfrac{ae^c+be^{-ac}}{a+b}
	\sim\tfrac{ae^c+2e^c/c}{a+b}
	\sim\tfrac{ae^c}b 
\end{equation} 
This, together with the relations $a_{c,\si}\sim\frac1c\,\ln(\si^2)$ and $b_{c,\si}\sim c\si^2/\ln(\si^2)$, implies \eqref{eq:W,c;si->infty}. 

(II)\quad Note that $f(0+)=-\infty$, $f(1)=0$, and $f'(t)=-2(t-\frac12)/t$ switches in sign from $+$ to $-$ as $t$ increases from $0$ to $1$. Now part (II) of Proposition~\ref{prop:win asymp} follows. 

(III)\quad To prove part (III), consider first the case $\si\downarrow0$. Then, by \eqref{eq:l1} and \eqref{eq:f}, for each fixed $t\in(0,1)$ one has 
$\ell_1(t\si^2)=\ln t-(2+o(1))(t-1)=f(t)+o(1)$. So, by part (II) of Proposition~\ref{prop:win asymp} for each fixed  $t\in(0,t_*)$ one has $\ell_1(t\si^2)<0$ -- eventually, for all small enough $\si$; similarly, for each fixed  $t\in(t_*,1)$ eventually $\ell_1(t\si^2)>0$. Therefore, by \eqref{eq:a_si}, 
$a_\si\sim t_*\si^2$ and hence, by \eqref{eq:b_si} and \eqref{eq:c_si}, $b_\si\to1/t_*$ and $c_\si\to-\ln t_*$. 
Now \eqref{eq:W;si->0} follows by \eqref{eq:E W}, since for $c=c_\si$ one has $e^c\to1/t_*$ and $c\to-\ln t_*=2(1-t_*)$, the last equality due to \eqref{eq:t}-\eqref{eq:f}. 

The case $\si\to\infty$ is considered similarly. Then, by \eqref{eq:l1}, $\ell_1\big(\ka\,\ln(\si^2)\big)\sim 2(\ka-\frac12)\,\ln(\si^2)$ for each fixed $\ka\in(0,\infty)\setminus\{\frac12\}$, so that $\ell_1\big(\ka\,\ln(\si^2)\big)$ is eventually less than $0$ for each $\ka\in(0,\frac12)$ and eventually greater than $0$ for each $\ka\in(\frac12,\infty)$. Thus, by part (II) of Proposition~\ref{prop:win}, $a_\si\sim\frac12\,\ln(\si^2)$ and hence $b_\si\sim2\si^2/\ln(\si^2)$ and $c_\si\to2$. 
Moreover, by \eqref{eq:b_si=}, one has $b_\si=b^*_{a_\si,c_\si}$. 
Recall that relations \eqref{eq:b*} and \eqref{eq:E W infty} were derived assuming that $a\to\infty$, $b=b^*_{a,c}$, and $c>0$ is fixed. Reasoning quite similarly -- with $a_\si$, $b_\si$, $c_si$ in place of such $a$, $b$, $c$ -- one concludes that $L_{W;\si}\sim a_\si\,e^{c_\si}/b_\si$, and now 
\eqref{eq:W;si->infty} follows since $a_\si\sim\frac12\,\ln(\si^2)$, $b_\si\sim2\si^2/\ln(\si^2)$, and $c_\si\to2$. 

(IV)\quad The derivative of $\tfrac{-c^2}{4(e^c-1)}$ in $c>0$ is positive iff $f(t)<0$ for $t:=e^{-c}$. Therefore and by part (II) of Proposition~\ref{prop:win asymp}, $\tfrac{-c^2}{4(e^c-1)}$ attains a minimum in $c>0$ at $c=-\ln t_*=2(1-t_*)$. Replacing now $c$ in the denominator of $\tfrac{-c^2}{4(e^c-1)}$ by $-\ln t_*$ and in the numerator, by $2(1-t_*)$, one obtains \eqref{eq:W,sup,si->0}. As for \eqref{eq:W,sup,si->infty}, it is much more straightforward. 
\end{proof}

\begin{proof}[Proof of Proposition~\ref{prop:trunc}]\ 
That each of the equations on the right-hand sides of \eqref{eq:A_c} and \eqref{eq:A_c,si} has a unique root $a>0$ follows because both $B^*_{a,c}$ and $aB^*_{a,c}$ strictly and continuously increase from $0$ to $\infty$ as does so. 
Now \eqref{eq:A>A} follows because the value of $a\,B^*_{a,c}$ at $a=A_c$ is $A_c$. 
\end{proof}

\begin{proof}[Proof of Theorem~\ref{th:trunc}]
Let here $F(x):=e^{c\,T(x)}$ and let $G(x)$ be defined as in \eqref{eq:F,G}. 

Consider first the case $\si^2\le A_c$. Here, take $G(x)$ with 
\begin{gather*}
	\al:=\frac{e^{-ac}(a^2 e^{ac}+ca^2+ac+2a+1)}{(a+1)^2},\\
	\be:=\frac{e^{-ac} \big(2 a(e^{ac}-1)+c(1-a^2)\big)}{(a+1)^2},\quad
	\ga:=\frac{e^{-ac} (e^{ac}-ac-c-1)}{(a+1)^2}. 
\end{gather*}
Then for $D:=F-G$ and any $a>0$ one has $D(-a)=D'(-a)=D(1)=0$. 
Let now $a=\si^2$, so that (by the current case condition) $0<a\le A_c$, which implies $B^*_{a,c}\le1$ and hence $G'(1)\le0$ \big(because $G'(1)=(B^*_{a,c}-1)c\,e^{-ac}/(1+a)$\big); also, $0<a\le A_c$ implies $(e^{ac}-1)-(1+a)c<2(e^{ac}-1)-(1+a)c=(B^*_{a,c}-1)c\le0$, 
whence $\ga<0$, so that $G$ is strictly convex on $\R$; moreover, $\be>e^{-ac}c\,(1+a^2)/(1+a)^2>0$. 
In turn, the inequality $G'(1)\le0$ means that $D'(1+)\ge0$; also, $D$ is strictly convex on $(-\infty,1)$ and $[1,\infty)$; recalling now that $D(-a)=D'(-a)=D(1)=0$, one has $D>0$ and $F>G$ on $\R\setminus\{-a,1\}$, while $F=G$ on the two-point set $\{-a,1\}=\{-\si^2,1\}$. 
Now the first line of \eqref{eq:trunc} follows -- cf.\ \eqref{eq:ge}. 

Consider now the case $\si^2\ge A_c$. Here, take $G(x)$ with 
\begin{align*}
	\al:=\frac{e^{-ac}(ca^2+2abc+2a+2b)}{2(a+b)},\quad 
	\be:=\frac{cbe^{-ac}}{a+b},\quad
	\ga:=-\frac{c e^{-ac}}{2(a+b)},
\end{align*}
where $a>0$ and $b:=B^*_{a,c}$. 
Assume now also that $a$ is so large as $b\ge1$. 
Then, again for $D:=F-G$, one has $D(-a)=D'(-a)=D(b)=D'(b)=0$, while $\be>0>\ga$, so that again $D$ is strictly convex on $(-\infty,1)$ and $[1,\infty)$, $D>0$ and $F>G$ on $\R\setminus\{-a,b\}$, while $F=G$ on the two-point set $\{-a,b\}$; if $b=1$ then $D'(b)$ is understood as the right derivative of $D$ at point $1$. 
Since the current case if $\si^2\ge A_c$, \eqref{eq:A>A} yields 
$A_{c,\si}\ge A_c$ and hence 
$B_{c,\si}=\si^2/A_{c,\si}=B^*_{A_{c,\si},c}\ge B^*_{A_c,c}=1$. 
Now, reasoning again similarly to \eqref{eq:ge}, one obtains the second line of \eqref{eq:trunc}. 

The proof of the strictness statement on \eqref{eq:trunc} is quite similar to that for \eqref{eq:c fixed}, because here as well one has $\be>0>\ga$ -- in either case, whether $\si^2\le A_c$ or $\si^2\ge A_c$.  
\end{proof}

\begin{proof}[Proof of Proposition~\ref{prop:trunc asymp}]\ 

Here the case $\si\downarrow0$ is quite staightforward. Indeed, then, by \eqref{eq:trunc}, one eventually has 
$
	L_{T;c,\si}-1=\E\exp\{c\,T\big(X_{\si^2,1}\big)\}-1
	=\frac{e^{-c\,\si^2}-1}{1+\si^2}\sim-c\,\si^2. 
$

As for the case $\si\to\infty$, the proof of \eqref{eq:T,c;si->infty} is quite similar to that of relation 
\eqref{eq:W,c;si->infty} in part (I) of Proposition~\ref{prop:win asymp}: replace all instances of $b^*_{a,c}$, $a_{c,\si}$, $b_{c,\si}$, $W$ by $B^*_{a,c}$, $A_{c,\si}$, $B_{c,\si}$, $T$, respectively, and also drop all instances of the factor $e^c$ in the numerators of the ratios in \eqref{eq:b*}, \eqref{eq:si^2}, and \eqref{eq:E W infty}. 
\end{proof}

\bibliographystyle{acm}
\bibliography{C:/Users/Iosif/Documents/mtu_home12-22-08/bib_files/citations}

\end{document}